\documentclass[12pt,a4paper]{amsart}
\usepackage[left=3.5cm,right=3.5cm,top=3.5cm,bottom=3.5cm]{geometry}
\usepackage{enumerate}
\usepackage{leftidx}
\usepackage{mathtools,setspace}
\usepackage{amsmath,amscd,amssymb,amsthm,mathrsfs,stmaryrd}
\usepackage[arrow,matrix]{xy}
\usepackage{graphicx,color,tikz}
\usepackage[colorlinks,linkcolor=blue,anchorcolor=blue,citecolor=blue,backref=page]{hyperref}
\usetikzlibrary {graphs}

\usetikzlibrary{positioning,bending}
\usetikzlibrary{arrows.meta}

\DeclareMathOperator{\gal}{Gal}

\DeclareMathOperator{\aut}{Aut}

\theoremstyle{definition}
\newtheorem{definition}{Definition}[section]
\newtheorem{example}[definition]{Example}
\newtheorem{remark}[definition]{Remark}

\theoremstyle{plain}
\newtheorem{theorem}[definition]{Theorem}
\newtheorem{corollary}[definition]{Corollary}
\newtheorem{lemma}[definition]{Lemma}
\newtheorem{proposition}[definition]{Proposition}

\newtheorem*{theorem*}{Theorem}

\newcommand{\F}{{ \mathbb F }}
\newcommand{\N}{{ \mathbb N }}
\newcommand{\Q}{{ \mathbb Q }}
\newcommand{\Z}{{ \mathbb Z }}

\newcommand{\p}{{ \mathfrak p }}

\newcommand{\m}{{ \mathfrak m }}

\renewcommand{\O}{{ \mathcal O }}

\author[A. Ferraguti]{Andrea Ferraguti}
\address{Universit\'a di Torino, Dipartimento di Matematica, via Carlo Alberto 10, 10123 Torino, Italy}
\email{and.ferraguti@gmail.com}

\title[]{Uncountably many settled elements are never Frobenius elements}
\keywords{Arithmetic dynamics, arboreal Galois representations, global fields.}
\subjclass[2020]{Primary  37P05, 37P15, 20E08.}

\begin{document}

\begin{abstract}
    Boston and Jones conjectured that given a number field $K$, a quadratic polynomial $f\in K[x]$, and an element $\alpha\in K$ such that the arboreal Galois representation $\rho_{f,\alpha}\colon \gal(\overline{K}/K)\to \aut(T_\infty)$ attached to the pair $(f,\alpha)$ is well defined, any element in the image of such representation that is a Frobenius for an unramified prime of $K$ is a settled element. We exhibit an explicit uncountable subset $S\subseteq \aut(T_\infty)$, consisting of pairwise non-conjugate settled elements, such that no matter how $K,f,\alpha$ are chosen, no element of $S$ can be a Frobenius for any ramified or unramified prime of $K$.
\end{abstract}

\maketitle
\section{Introduction}

Let $K$ be a number field, let $f\in K[x]$ be of degree $d\ge 2$ and let $\alpha\in K$. Under the condition that $\alpha$ is not in the image of the ramification locus of any iterate of $f$, one can construct an arboreal Galois representation
$$\rho_{f,\alpha}\colon \gal(\overline{K}/K)\to \Omega_\infty^d,$$
where $\Omega_\infty^d$ is the automorphism group of the infinite, regular, rooted, $d$-ary tree $T_\infty^d$. We refer to the survey \cite{jones} for more detail. The image of such representation is denoted by $G_\infty(f,\alpha)$, and is also referred to as the \emph{dynamical Galois group} of the pair $(f,\alpha)$ (and in fact it can be defined in the broader setting where $\alpha$ is allowed to lie in the image of some ramification locus).

Understanding which elements of $G_\infty(f,\alpha)$ can be Frobenius elements of some prime of $K$ is an intriguing and hard-looking task; of course as soon as $G_\infty(f,\alpha)$ contains uncountably many conjugacy classes of elements (which is the case for example when $[\Omega_\infty^d:G_\infty(f,\alpha)]<\infty$), then there are uncountably many conjugacy classes that cannot contain any Frobenius element, since there are only countably many conjugacy classes of Frobenius elements. It is however unknown how to characterize conjugacy classes that contain Frobenius elements.

In \cite{bosjon1}, Boston and Jones introduced the notion of \emph{settledness}. Given a polynomial $f$ of degree $d\ge 2$ over a field $F$, a polynomial $g\in F[x]$ is called \emph{$f$-stable} if $g(f^k)$ is irreducible for every $k\ge 1$. Fix $\alpha\in F$ and for every $n\ge 1$ let $e_n$ be the sum of the degrees of the $f$-stable factors of $f^n-\alpha$. We say that $(f,\alpha)$ is \emph{settled} if $\lim_{n\to \infty}e_n/d^n=1$. The notion of settledness carries over to elements of $\Omega_\infty^d$, as explained in \cite{bosjon2}. If $V_n$ is the set of nodes at distance $n$ from the root in $T_\infty^d$ and $\sigma\in \Omega_\infty^d$ we say that $v\in V_n$ \emph{lies in a stable cycle} with respect to $\sigma$ if, once we let $e$ be the length of the orbit of $v$, we have that for every $k>n$ any descendant of $v$ in $V_k$ lies in a $\sigma$-orbit of length $ed^{k-n}$. The element $\sigma$ is then called \emph{settled} if the proportion of nodes in $V_n$ that lie in a stable cycle with respect to $\sigma$ tends to $1$ as $n\to \infty$. The two notions are connected in the following way: suppose that $K$ is a number field, $f\in K[x]$ has degree $d\ge 2$ and $\alpha\in K$, and assume that $\rho_{f,\alpha}$ is well-defined. Let $\p$ be a prime of $K$ that is unramified in the direct limit $K_\infty(f,\alpha)$ of the splitting fields of $f^n-\alpha$ and such that $f$ and $\alpha$ are integral at $\p$. Let $\sigma_\p\in G_\infty(f,\alpha)\subseteq\Omega_\infty^d$ be a Frobenius element at $\p$. Since the factorization pattern of $f^n-\alpha$ modulo $\p$ coincides with the cycle decomposition of $\sigma_\p$ as a permutation of the roots of $f^n-\alpha$, we get that $\sigma_\p$ is settled if and only if $(f,\alpha)$ is settled modulo $\p$.

In \cite{bosjon1} it was conjectured that if $F$ is a finite field of odd characteristic and $f\in F[x]$ is a quadratic polynomial different from $x^2$, then $(f,0)$ is settled. Notice that if $f=(x-a)^2+b\in F[x]$, then $(f,0)$ is settled if and only if $({}^gf,-a)$ is settled, where ${}^{g}f(x)=f(x+a)-a$. Hence, the aforementioned conjecture is equivalent to asking that if $f=x^2+c\in \F_q[x]$ for some odd $q$, then $(f,\alpha)$ is settled unless $(c,\alpha)=(0,0)$. Boston and Jones' conjecture implies that if $K$ is a number field, $f=x^2+c\in K[x]$, $\alpha\in K$ and $\p$ is a prime of $K$ that is unramified in $K_\infty(f,\alpha)$ and such that $f$ and $\alpha$ are $\p$-integral, then any Frobenius at $\p$ is settled. Notice that when $f$ is post-critically finite, the set $S$ of primes that do not satisfy the above conditions is finite and hence, by Chebotarev density theorem, elements of $G_\infty(f,\alpha)$ that are a Frobenius for a prime outside $S$ are dense. We say that $G_\infty(f,\alpha)$ is \emph{densely settled} if it contains a dense subset of settled elements, and it follows from Boston and Jones' conjecture that $G_\infty(f,\alpha)$ is densely settled when $f$ is post-critically finite (see \cite{lukina} for recent developments on this problem). We remark the noteworthy work of Ejder \cite{ejder} that shows that dynamical Galois groups of PCF quadratic rational functions are not always densely settled.

Boston and Jones' conjecture implies that for quadratic polynomials over number fields, Frobenius elements of unramified primes in $G_\infty(f,\alpha)$ are settled. This raises a very natural question: which settled elements of $G_\infty(f,\alpha)$ are \emph{not} allowed to be Frobenius elements? Our main theorem is the following (see Theorem \ref{non_frobenius} and Remark \ref{explicit}).

\begin{theorem*}
    There exists an explicit uncountable subset $S\subseteq \Omega_\infty^2$ of pairwise non-conjugate settled elements with the following property: if $K$ is a number field, $f=x^2+c\in K[x]$, and $\alpha\in K$ is not postcritical, the set $\{\sigma\in G_\infty(f,\alpha)\colon \sigma \mbox{ is a Frobenius for some prime of }K\}$ does not intersect $S$.
\end{theorem*}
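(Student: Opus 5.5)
The plan is to find a conjugacy invariant of elements of $\Omega_\infty^2$ that is forced to be \emph{eventually periodic} for every Frobenius element, whatever $K,f,\alpha,\p$ are, and then to build uncountably many settled elements for which this invariant is not eventually periodic. The natural invariant is the sequence of level signs. For $\sigma\in\Omega_\infty^2$ and $n\ge1$, let $\varepsilon_n(\sigma)\in\{\pm1\}$ be the sign of $\sigma$ as a permutation of $V_n$. This is a conjugacy invariant of $\Omega_\infty^2$, so elements with distinct sequences $(\varepsilon_n(\sigma))_n$ are automatically pairwise non-conjugate.

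\textbf{Arithmetic step.} On $G_\infty(f,\alpha)$, the character $\varepsilon_n\circ\rho_{f,\alpha}$ is the quadratic character cutting out $K(\sqrt{\Delta_n})$, where $\Delta_n$ is the discriminant of $f^n-\alpha$. For $f=x^2+c$ the standard recursion is
$$\Delta_n=\pm 2^{a_n}\,\Delta_{n-1}^2\,\bigl(f^n(0)-\alpha\bigr),$$
with explicit $a_n$ and sign. Hence $\varepsilon_n\circ\rho=\chi_{u_n\delta_n}$, where $\delta_n=f^n(0)-\alpha\neq 0$ (because $\alpha$ is not postcritical) and $u_n\in\{\pm1,\pm2\}$ depends on $n$ only through a parity pattern of period at most $2$. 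Now let $\p$ be any prime of $K$ and let $\sigma$ be a Frobenius at $\p$, meaning an element of the decomposition group that induces the $q$-power map on the residue field. I will show that $\varepsilon_n(\sigma)$ is determined by data that is eventually periodic in $n$.
\begin{enumerate}[(a)]
\item If $\p\nmid 2$ and $c,\alpha$ are $\p$-integral, the orbit of $0$ under $f$ mod $\p$ is eventually periodic in the finite residue field. Whenever $\p$ is unramified in $K(\sqrt{u_n\delta_n})$, $\sigma(\sqrt{u_n\delta_n})=\sqrt{u_n\delta_n}^{\,q}$, so $\varepsilon_n(\sigma)$ is the residue symbol of $u_n\delta_n$.
\item If $\p$ ramifies in $K(\sqrt{u_n\delta_n})$, the decomposition group contains inertia. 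I will use that, for a \emph{specific} Frobenius lift, the value is pinned down by the image of $u_n\delta_n$ in $K_\p^\times/K_\p^{\times2}$, which is a finite group. The aim is to show that this class is eventually periodic in $n$, using the $\p$-adic dynamics of $f$ on $\delta_n$. If $v_\p(c)<0$, then $v_\p(\delta_n)$ is controlled by $2^{n-1}v_\p(c)$ and the unit part is eventually constant modulo squares. If $\p\mid2$, I will work modulo a high power of $\p$, where orbits are again eventually periodic.
\end{enumerate}
In all cases $(\varepsilon_n(\sigma))_n$ is eventually periodic.

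\textbf{Main obstacle.} The ramified case is where I expect the real difficulty. A "Frobenius" for a ramified prime is only defined up to inertia, and different lifts can have different signs. I would first try to prove the statement for every lift, by showing that eventually either $\p$ is unramified in $K(\sqrt{u_n\delta_n})$, or the inertia action on $\sqrt{u_n\delta_n}$ follows a periodic pattern in $n$. This comes down to the eventual periodicity of $\delta_n$ in the finite group $K_\p^\times/(1+\p^N)K_\p^{\times 2}$. If this fails for arbitrary lifts, the fallback is to weaken the invariant to one that inertia cannot affect.

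\textbf{Construction step.} For each sequence $s\in\{\pm1\}^{\N}$ I will build a settled $\sigma_s\in\Omega_\infty^2$ with $\varepsilon_n(\sigma_s)=s_n$. Take the odometer $\tau$, which is settled because every node lies in a stable cycle, and modify it in a sparse way. At level $n$, act on a single branch of relative size $2^{-m_n}$, with $\sum 2^{-m_n}<\infty$, by a local automorphism that flips the level-$n$ sign and keeps the rest of the branch in stable cycles. The proportion of nodes lying in stable cycles then tends to $1$, so $\sigma_s$ is settled.

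Finally, let $S$ be the set of $\sigma_s$ with $s$ not eventually periodic. This set is uncountable, and its elements are pairwise non-conjugate because their sign sequences differ. By the arithmetic step, no element of $S$ is a Frobenius element.
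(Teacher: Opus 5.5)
You use the same invariant as the paper. The sign of $\sigma$ on $V_n$ is $(-1)^{\phi_n(\sigma)}$, and your discriminant recursion is the identification $\phi_n=\chi_{c_{n,\alpha}}$ that the paper takes from \cite[Proposition 4.2]{ferpag1} (in fact $u_1=-1$ and $u_n=1$ for $n\ge 2$).

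The gap is the arithmetic claim that $(\varepsilon_n(\sigma))_n$ is eventually periodic for every Frobenius $\sigma$. When $c,\alpha$ are $\p$-integral, eventual periodicity of $(\delta_n \bmod \p^N)_n$ for a fixed $N$ only determines the class of $\delta_n$ in $K_\p^\times/K_\p^{\times 2}$ at indices where $v_\p(\delta_n)$ is small compared to $N$. This already breaks your case (a): when $\delta_n\in\p$, the relevant residue symbol depends on deeper digits. If $\alpha$ is a $\p$-adic accumulation point of the orbit of $0$ without lying on it, $v_\p(\delta_n)$ is unbounded and no $N$ works.

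The conclusion itself can also fail. Suppose the tail of the orbit of $0$ lies in an infinite $\p$-adic minimal clopen set $E$ that contains $\alpha$ (take $\alpha\in K\cap E$ off the orbit). A Frobenius lift $\sigma$ at $\p$ induces a nontrivial character $\chi$ of $K_\p^\times/K_\p^{\times 2}$; fix any $P\ge 1$. Every tail of the orbit is dense in $E$, so there are arbitrarily large $n,n'$ with $\delta_n,\delta_{n'}$ arbitrarily close to $0$ and $\chi(\delta_n)\neq\chi(\delta_{n'})$. Meanwhile $\delta_{n+P}$ and $\delta_{n'+P}$ are both close to $f^P(\alpha)-\alpha\neq 0$, because $E$ has no periodic points, so they share its class. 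Hence $(\chi(\delta_n))_n$ has no eventual period. Your set $S$ (all $\sigma_s$ with $s$ not eventually periodic) is therefore not shown to avoid Frobenius elements. The inertia ambiguity you single out is a red herring: once the square classes of the $\delta_n$ are controlled, all of $D_\p$ is handled at once, which is how the paper argues.

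The missing idea (Lemma \ref{preperiodic_orbit}) is to ask only for preperiodicity along an arithmetic progression that depends on $(K_\p,f,\alpha)$. Assume $f(\alpha)\neq\alpha$ and set $r=v_\p(f(\alpha)-\alpha)$. The identity $\delta_{n+1}=(f(\alpha)-\alpha)+2\alpha\delta_n+\delta_n^2$ forces $v_\p(\delta_{n+1})=r$ whenever $v_\p(\delta_n)>r$. So for large $N$, the periodic part of $(\delta_n \bmod \p^N)_n$ contains some $\delta_k$ with $v_\p(\delta_k)\le r$. Taking $\ell>k$ with $\delta_\ell\equiv\delta_k \bmod \p^N$ makes the square class constant along $k+(\ell-k)n$.

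Correspondingly, $S$ must be cut down in two ways. First, the paper only uses elements acting nontrivially on $V_1$; such an element can lie in $G_\infty(f,\alpha)$ only if $f(\alpha)\neq\alpha$. Second, it requires $(\phi_n)$-sequences none of whose subsequences $(\phi_{an+b})_{n\ge 1}$ is preperiodic, such as those built from the block sequence $\widetilde s$ of Remark \ref{explicit}. This is strictly stronger than not being eventually periodic.

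Your construction step also does not justify settledness. The odometer leaves no proper branch invariant, so changing portrait bits inside one branch changes the cycles along that branch's whole orbit. For example, flipping a single level-$(n+1)$ bit of the adding machine splits $V_{n+1}$ into two cycles, one of which splits again at every later level. Half of the nodes at every deeper level then lie outside stable cycles. You need level-by-level control of the cycles, as in $\Psi$ (Lemma \ref{orbits}, Proposition \ref{settled_properties}). Your non-conjugacy argument via the class functions $\phi_n$ is fine.
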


Notice that the set $S$ does not depend on $K,f,\alpha$: elements of $S$ cannot be a Frobenius no matter how the starting data is chosen. Moreover, there is no hypothesis on ramification: elements of $S$ are not even allowed to be Frobenius elements for ramified primes. Here by "Frobenius for a ramified prime $\p$" we mean a lift of a topological generator of $D_\p/I_\p$, where $D_p$ and $I_p$ are the decomposition and the inertia group at $\p$, respectively.

The proof consists of two steps. First, we construct an explicit injective map $\Psi\colon\{0,1\}^{\N_{\ge 1}}\to \Omega_\infty^2$ whose image consists of pairwise non-conjugate settled elements (see Proposition \ref{settled_properties}). In fact, not only these elements are non-conjugate but the natural map
$$(\phi_n)_{n\ge 1}\colon\Omega_\infty^2\to (\Omega_\infty^2)^{\text{ab}}\cong \F_2^{\N_{\ge 1}}$$
restricts to a bijection on the image of $\Psi$. Next, we prove that elements $\sigma\in \Omega_\infty^2$ that lie in the image of an arboreal representation of a quadratic polynomial $f$ over a $p$-adic field with a non-fixed basepoint $\alpha$ have the property that for some integers $a\ge 1$ and $b\ge 0$, the sequence $(\phi_{ak+b}(\sigma))_{k\ge 1}$ is preperiodic (see Corollary \ref{cor_per}). Now it is easy to conclude, since of course $\F_2^{\N_{\ge 1}}$ contains uncountably many elements $(t_n)_{n\ge 1}$ such that $(t_{ak+b})_{k\ge 1}$ is not preperiodic for any $a,b$. Moreover, one can use the descriptions of $(\phi_n)_{n\ge 1}$ and $\Psi$ to exhibit an explicit uncountable subset $T\subseteq \{0,1\}^{\N_{\ge 1}}$ with the property that no element in $\Psi(T)$ is allowed to be a Frobenius.

\section*{Human-AI collaboration}
This paper was ideated and written between May 2024 and June 2025. No LLM has been used to write or contribute to any part of the paper. 

\section*{Acknowledgments}
The idea for this paper originated during the workshop "Dynamics on zero-dimensional spaces: new connections" which was held at the Lorentz Center in Leiden, in May 2024. I am grateful to everyone who took part to the organization of the workshop for the great working conditions that were provided. I am also deeply indebted to Carlo Pagano for providing the key insight of Lemma \ref{preperiodic_orbit}, and for highly instructive conversations, as always. I have been partially supported by the
“National Group for Algebraic and Geometric Structures, and their Applications" (GNSAGA
- INdAM).

\section{Constructing settled elements}\label{constructing_settled}
From now on, we let $\Omega_\infty=\Omega_\infty^2$ be the automorphism group of the infinite, rooted, binary tree $T_\infty$, and we denote by $V_n$ the set of nodes of the tree at distance $n$ from the root. We recall that every element $\sigma\in \Omega_\infty$ is uniquely described by its portrait. Namely, we can write
$$\sigma=(\varepsilon_1^1(\sigma),\varepsilon_2^1(\sigma)\varepsilon_2^2(\sigma),\ldots,\varepsilon_n^1(\sigma)\ldots \varepsilon_n^{2^{n-1}}(\sigma),\ldots)$$
where $\varepsilon_i^j(\sigma)\in \F_2$ for every $i,j$. Here we are adopting the convention of labeling the nodes at level $n$ in $T_\infty$ from left to right with $v_1^n,\ldots,v_{2^n}^n$; with this convention, suppose that $v_i^n\in V_n$, $v_{\lceil i/2\rceil}^{n-1}$ is the ancestor of $v_i$ at level $n-1$, $\sigma\in \Omega_\infty$ and $v_j^{n-1}=\sigma(v_{\lceil i/2 \rceil}^{n-1})$. If $v_k^n,v_{k+1}^n$ are the descendants of $v_j^{n-1}$ at level $n$, where $k\coloneqq 2(j-1)+1$, then 
\begin{equation}\label{sign_parity}
 \sigma(v_i^n)=v_k^n \mbox{ if and only if } i+k\equiv \varepsilon_n^j(\sigma) \bmod 2.
\end{equation}
In practice, to ease the notation we will write $v_i$ instead of $v_i^n$, as long as there is no ambiguity about the level of $v_i$.

    \begin{figure}[h]
\centering

\begin{tikzpicture}

\node[circle,draw]{$v_1^0$}[sibling distance=70mm]
child{node[circle,draw](a){$v_1^1$}[sibling distance=38mm]
child{node[circle,draw](c){$v_1^2$}[sibling distance=18mm]
child{node[circle,draw](g){$v_1^3$}}
child{node[circle,draw](h){$v_2^3$}}}
child{node[circle,draw](d){$v_2^2$}[sibling distance=18mm]
child{node[circle,draw](i){$v_3^3$}}
child{node[circle,draw](j){$v_4^3$}}}
}
child{node[circle,draw](b){$v_2^1$}[sibling distance=38mm]
child{node[circle,draw](e){$v_3^2$}[sibling distance=18mm]
child{node[circle,draw](k){$v_5^3$}}
child{node[circle,draw](l){$v_6^3$}}}
child{node[circle,draw](f){$v_4^2$}[sibling distance=18mm]
child{node[circle,draw](m){$v_7^3$}}
child{node[circle,draw](n){$v_8^3$}}}
};
\path (a) -- (b) node [midway] {0};
\path (c) -- (d) node [midway] {1};
\path (e) -- (f) node [midway] {1};
\path (g) -- (h) node [midway] {0};
\path (i) -- (j) node [midway] {0};
\path (k) -- (l) node [midway] {1};
\path (m) -- (n) node [midway] {0};

\end{tikzpicture}\caption{An element with portrait $(0,11,0010,\ldots)$ acts on $V_3$ as the permutation $(13)(24)(5768)$}\label{fig:tree}
\end{figure}

\begin{lemma}\label{orbits}
	Let $\sigma\in \Omega_\infty$ and let $n\ge 0$. Let $\mathcal O\coloneqq\{v_{i_1},\ldots,v_{i_m}\}$ be an orbit of $\sigma$ acting on $V_n$, where $i_j\in \{1,\ldots,2^n\}$ for every $j$, the $v_{i_j}$'s are all distinct nodes in $V_n$, $\sigma(v_{i_j})=v_{i_{j+1}}$ for every $j=1,\ldots,m-1$ and $\sigma(v_{i_{m}})=v_{i_1}$.
	\begin{enumerate}[(a)]
		\item Suppose that there exists a unique $k\in \{1,\ldots,m\}$ such that 
		$$\varepsilon_{n+1}^{i_j}(\sigma)=\begin{cases}1 & \mbox{if } j=k\\0 & \mbox{otherwise}\end{cases}.$$ Then the descendants of the $v_{i_j}$'s in $V_{n+1}$ form a unique $\sigma$-orbit.
		\item Suppose that for every $j\in \{1,\ldots,m\}$ we have $\varepsilon_{n+1}^{i_j}(\sigma)=0$. Then the descendants of the $v_{i_j}$'s in $V_{n+1}$ partition in two disjoint $\sigma$-orbits each of length $m$.
		\end{enumerate} 
\end{lemma}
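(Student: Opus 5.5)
The plan is to follow the descendants of a single node around the orbit and count how many steps it takes to return. Each $v_{i_j}\in V_n$ has two children in $V_{n+1}$; call them $w_{i_j}^0$ and $w_{i_j}^1$, the left and right child. By \eqref{sign_parity}, $\sigma$ sends the children of $v_{i_j}$ bijectively onto the children of $\sigma(v_{i_j})=v_{i_{j+1}}$ (indices mod $m$). It preserves the left/right label when $\varepsilon_{n+1}^{i_j}(\sigma)=0$ and swaps it when $\varepsilon_{n+1}^{i_j}(\sigma)=1$. Concretely, in the convention of \eqref{sign_parity}, the parity of the child's index relative to the first child of the target node is governed by $\varepsilon$. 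Hence
\[
\sigma(w_{i_j}^{t})=w_{i_{j+1}}^{\,t+\varepsilon_{n+1}^{i_j}(\sigma)} \qquad (t\in\F_2).
\]
The one point that needs care is matching this with the paper's superscript convention for $\varepsilon$, i.e.\ which node's label $\varepsilon_{n+1}^{j}$ refers to. The formula $i+k\equiv\varepsilon \bmod 2$, together with the example in Figure~\ref{fig:tree}, should confirm that the relevant $\varepsilon$ is the one attached to the source node $v_{i_j}$. Up to that bookkeeping, the lemma is formal.

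Iterating $m$ times from $w_{i_1}^t$ returns to a child of $v_{i_1}$, namely
\[
\sigma^m(w_{i_1}^t)=w_{i_1}^{\,t+s}, \qquad s=\sum_{j=1}^m \varepsilon_{n+1}^{i_j}(\sigma)\in\F_2 .
\]
Moreover, for $0<r<m$ the node $\sigma^r(w_{i_1}^t)$ lies over $v_{i_{1+r}}\neq v_{i_1}$. So the $\sigma$-orbit of $w_{i_1}^t$ has length exactly $m$ if $s=0$, and exactly $2m$ if $s=1$ (in that case $\sigma^m$ swaps the two children, and $\sigma^{2m}$ fixes them).

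In case (a), $s=1$. The orbit of $w_{i_1}^0$ therefore has $2m$ elements, all lying among the $2m$ descendants of the $v_{i_j}$'s, so it exhausts them and they form a single orbit. In case (b), $s=0$. All labels are preserved, so the sets $\{w_{i_j}^0\}_j$ and $\{w_{i_j}^1\}_j$ are each $\sigma$-stable, and each is a single orbit of length $m$. They are disjoint and together cover all the descendants.
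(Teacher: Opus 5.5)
Your proof is correct and is essentially the paper's argument: follow a child of $v_{i_1}$ around the cycle and observe that $\sigma^m$ fixes or swaps the two children according to the parity $s$ of the labels on the orbit. This is how the paper obtains $\varepsilon_{n+1}^{i_j}(\sigma^m)=1$ in (a) and parity preservation in (b). One bookkeeping correction: in \eqref{sign_parity} the label $\varepsilon_{n+1}^{j}$ is attached to the \emph{target} node $v_j=\sigma(\text{parent})$, not the source (Figure~\ref{fig:tree} gives $(5768)$, whereas the source convention would give $(5867)$), so your exponent should be $\varepsilon_{n+1}^{i_{j+1}}(\sigma)$ --- this is harmless, since only the orbit sum $s$ enters your argument.
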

\begin{proof}
	$(a)$ First of all, up to a cyclic relabeling of the $v_{i_j}$'s we can assume without loss of generality that $k=1$. We first claim that $\varepsilon_{n+1}^{i_j}(\sigma^m)=1$ for every $j\in \{1,\ldots,m\}$. Let $j\in \{1,\ldots, m\}$ and let $w_{\ell_1}$ be a descendant of $v_{i_j}$ in $V_{n+1}$, for some $\ell_1\in \{2(i_j-1)+1,2(i_j-1)+2\}$. Since for $j\ne 1$ we have that $\varepsilon_{n+1}^{i_j}(\sigma)=0$ it follows, by applying \eqref{sign_parity} recursively $m-j$ times, that 
	\begin{equation}\label{first_piece}
		\sigma^{m-j}(w_{\ell_1})=w_{\ell_2} \mbox{ with }\ell_1 \equiv \ell_2\bmod 2.
	\end{equation}
	Now $w_{\ell_2}$ is a descendant of $v_{i_m}$ by construction, and since $\varepsilon_{n+1}^{i_1}(\sigma)=1$ by hypothesis we have by \eqref{sign_parity} that $\sigma(w_{\ell_2})=w_{\ell_3}$ with $\ell_2\equiv \ell_3+1\bmod 2$, and consequently by \eqref{first_piece} we get:
	\begin{equation}\label{second_piece}
		\sigma^{m-j+1}(w_{\ell_1})=w_{\ell_3} \mbox{ with } \ell_1\equiv \ell_3+1\bmod 2.
	\end{equation}
	Finally, applying \eqref{sign_parity} to \eqref{second_piece} $j-1$ times we get that
	$$\sigma^m(w_{\ell_1})=\sigma^{m-j+1+(j-1)}(w_{\ell_1})=w_{\ell_1'} \mbox{ with }\ell_1\equiv \ell_1'+1\bmod 2.$$
	On the other hand $\sigma^{m}$ is the identity when acting on $(v_{i_1},\ldots,v_{i_{m}})$, and hence $w_{\ell_1'}$ must be the brother of $w_{\ell_1}$. But this means exactly that $\varepsilon_{n+1}^{i_j}(\sigma^m)=1$.

	To conclude the proof, we have to show that if $w,w' \in V_{n+1}$ are descendants of some nodes in $\mathcal O$, then $\sigma^N(w)=w'$ for some $N$. Since $\sigma$ acts transitively on $(v_{i_1},\ldots,v_{i_m})$, there exists $M$ such that $\sigma^M(w)$ and $w'$ have the same parent. If $\sigma^M(w)=w'$, we are done. Otherwise, $\sigma^m(\sigma^M(w))=w'$, since by what we proved above $\sigma^m$ acts on $\sigma^M(w)$ by swapping it with its brother.
	
	$(b)$ Let $w_\ell$ be a descendant of $v_{i_1}$. Since $\sigma$ acts transitively on $\mathcal O$, it is obvious that the $\sigma$-orbit of $w_\ell$ has at least $m$ elements. On the other hand it using \eqref{sign_parity} $m$ times it follows immediately that if $\sigma^k(w_\ell)=w_{h}$ for some $h\in \{1,\ldots,2^{n+1}\}$ and some $k\in \{1,\ldots,m\}$, then necessarily $\ell\equiv h\bmod 2$. This shows that the $\sigma$-orbit of $w_\ell$ has exactly $m$ elements. The same argument applies to the brother $w_{\ell'}$ of $w_\ell$, and since $\ell\not\equiv \ell'\bmod 2$, the two orbits must be disjoint.
\end{proof}

From now on, we let $\N_1\coloneqq \N\setminus\{0\}$.

We are now going to construct an injective map
$$\Psi\colon\{0,1\}^{\N_1}\to \Omega_\infty$$
by describing the portrait of $\Psi(s)$ for every $s\in \{0,1\}^{\N_1}$. This is done recursively in the following way. Let $s=(s_1,s_2,\ldots,s_n,\ldots)\in \{0,1\}^{\N_1}$; for every $n\ge 1$, $\Psi(s)$ acts on $V_n$ as an element of $S_{2^n}$, and as such it decomposes into $d_n$ disjoint cycles $c_1,\ldots,c_{d_n}$, which we order by increasing length.\footnote{It does not really matter how we choose to break ties, but for the sake of choosing a way, if $c,c'$ are cycles of the same length then they are elements of $S_{2^n}$ via the labeling of $V_{n}$ we fixed at the beginning, and hence they contain nodes uniquely labeled with elements of $\{1,\ldots,2^n\}$. We write $c$ first if the least index of its nodes is smaller than that of $c'$.} For every $k\in \{1,\ldots,d_n\}$, we let $v_{i_k}\in V_n$ be the node with the smallest index among those that belong to the orbit of $c_k$, so that $i_k\in \{1,\ldots,2^n\}$ for every $k$.\footnote{Once again, this choice is not really relevant, we could pick any one node from every cycle, but for the sake of defining a unique function, we made this choice.} Now the $(n+1)$-th entry of the portrait of $\Psi(s)$ is defined in the following way: if $s_{n+1}=0$ then we let $\varepsilon_{n+1}^i(\Psi(s))=1$ if $i=i_k$ for some $k\in \{1,\ldots,d_n\}$, and $\varepsilon_{n+1}^i(\Psi(s))=0$ otherwise. If $s_{n+1}=1$, then we let $\varepsilon_{n+1}^i(\Psi(s))=1$ if $i=i_k$ for some $k\in \{2,\ldots,d_n\}$, and $\varepsilon_{n+1}^i(\Psi(s))=0$ otherwise.

\begin{example}
	Let $s\in \{0,1\}^{\N_1}$ be a sequence with $s_1=1$, $s_2=0$ and $s_3=1$. To describe the first three entries of the portrait of $\Psi(s)$, we start from $\Psi(s)$ acting on $V_0$. This is a set with one element, so $\Psi(s)$ decomposes at level $0$ as a single cycle $c_1$. Since $s_1=1$, the unique digit of the first entry of the portrait of $\Psi(s)$ is the one corresponding to $c_1$, and it must be a $0$. Hence $\varepsilon_1^1(\Psi(s))=0$, $\Psi(s)$ acts as the identity on $V_1$, and it decomposes in $S_2$ as two disjoint $1$-cycles $c_1c_2=(v_1)(v_2)$. Now we have to fix, for every cycle, the node with the least index. Since $s_2=0$, the construction tells us that $\varepsilon_2^1(\Psi(s))=\varepsilon_2^2(\Psi(s))=1$. It follows that $\Psi(s)$ acts on $V_2$ as a product of two $2$-cycles $c_1=(v_1v_2)$ and $c_2=(v_3v_4)$. Now we fix in $V_2$, for each of these cycles, the node with the least index, namely $v_1$ and $v_3$. Since $s_3=1$, we must have $\varepsilon_3^i(\Psi(s))=1$ if $i=3$ and $\varepsilon_3^i(\Psi(s))=0$ otherwise. Hence the portrait of $\Psi(s)$ will look like:
	$$(0,11,0010,\ldots).$$
    As shown in Figure \ref{fig:tree}, this acts on $V_3$ as the permutation $(13)(24)(5768)$.
\end{example}

Before describing the properties of $\Psi$, recall that for every $n\ge 1$ there is a continuous group homomorphism
$$\phi_n\colon \Omega_\infty\to \F_2$$
$$\sigma\mapsto \sum_{i=1}^{2^{n-1}}\varepsilon_n^i(\sigma).$$
More details can be found in \cite{ferpag3,ferpag2,ferpag1}.

\begin{proposition}\label{settled_properties}
    The following hold true.
    \begin{enumerate}[(a)]
        \item For every $s=(s_i)_{i\ge 1}\in \{0,1\}^{\N_1}$, the number of orbits of $\Psi(s)$ acting on $V_n$ is 
        $$1+\sum_{i=1}^ns_i.$$
        \item For every $s\ne t\in \{0,1\}^{\N_1},$ the elements $\Psi(s)$ and $\Psi(t)$ are not conjugate in $\Omega_\infty$. In particular, $\Psi$ is injective.
        \item The map $\phi_n\circ\Psi\colon\{0,1\}^{\N_1}\to \F_2$ is given by
        $$(s_i)_{i\ge 1}\mapsto 1+\sum_{i=1}^ns_i\bmod 2.$$
        Therefore the map
        $$(\phi_n)_{n\ge 1}\circ \Psi \colon \{0,1\}^{\N_1} \to \F_2^{\N_1}\cong \Omega_\infty^{\text{ab}}$$
        is a bijection.
        \item For every $s\in \{0,1\}^{\N_1}$, the element $\Psi(s)$ is settled.       
    \end{enumerate}
\end{proposition}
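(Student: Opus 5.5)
The whole proposition rests on one structural claim about $\Psi(s)$ acting level by level, which we prove by induction on $n$ using Lemma \ref{orbits}. For each orbit $c_k$ of $\Psi(s)$ on $V_n$, the construction puts exactly one portrait digit equal to $1$ on its nodes (at $v_{i_k}$), with the single exception of $c_1$ when $s_{n+1}=1$. In that case all digits on $c_1$ are $0$.

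By Lemma \ref{orbits}(a), each orbit carrying one $1$ lifts to a single orbit of double length on $V_{n+1}$. By Lemma \ref{orbits}(b), the exceptional orbit splits into two orbits of the same length. Hence $d_{n+1}=d_n+s_{n+1}$. Since $d_0=1$, this gives (a).

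For (c), $\phi_{n}(\Psi(s))$ is the total number of $1$'s at level $n$. That number is $d_{n-1}-s_{n}$, which by (a) equals $1+\sum_{i=1}^{n-1}s_i - s_n \equiv 1+\sum_{i=1}^n s_i \bmod 2$. The map $s\mapsto (1+\sum_{i\le n}s_i)_n$ is a bijection $\{0,1\}^{\N_1}\to\F_2^{\N_1}$, since one recovers $s_n$ as the difference of consecutive terms (with $s_1=1+t_1$). This proves the second claim of (c).

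For (b), conjugate elements of $\Omega_\infty$ have the same number of orbits on each $V_n$, because conjugation by a tree automorphism preserves levels. If $s\neq t$, let $n$ be the first index with $s_n\ne t_n$. Then (a) gives different orbit counts on $V_n$, so $\Psi(s)$ and $\Psi(t)$ are not conjugate. Injectivity follows. Alternatively, (c) already shows injectivity, since distinct images in the abelianization are distinct elements.

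For (d), I would track the orbit lengths more carefully. The induction shows that at level $n$ the orbit $c_1$ is the unique shortest orbit, or the two copies produced by the split are. Every other orbit gets a single $1$ at every subsequent level, regardless of $s$. This needs a refined induction hypothesis: the orbits other than the (at most two) shortest ones are always treated by case (a). Here one must check that the ordering by increasing length keeps the split orbit first. The split pieces have length $\ell$, while all other orbits have length at least $2\ell$ after doubling, so this holds.

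Granting this, every orbit on $V_n$ other than the $c_1$ chosen at each later stage is stable, so its descendants double forever. The only unstable nodes at level $n$ lie in $c_1$ and possibly one sibling of the same length. Their number is the length of the shortest orbit at level $n$. I would bound this by at most $2\cdot 2^{\#\{i\le n : s_i=0\}}$-ish, and in fact want it to be $o(2^n)$.

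This is the main obstacle: we must show that the shortest orbit has length $o(2^n)$, or else count unstable nodes differently. Suppose at level $m$ the exceptional orbit is later split at some level $>m$. Then the nodes of the other, stable orbits at level $m$ account for a proportion that is preserved at all deeper levels. Each split produces a new stable piece of proportion equal to half the shortest orbit. If $s$ has infinitely many $1$'s, the unstable mass halves infinitely often, so the unstable proportion tends to $0$.

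If $s$ is eventually $0$, then from some level on every orbit receives exactly one $1$. In that case every orbit at that level is stable, including $c_1$, and the unstable proportion is $0$ from that level onward. Either way the proportion of nodes of $V_n$ lying in stable cycles tends to $1$, so $\Psi(s)$ is settled.
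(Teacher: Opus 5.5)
Your proposal is correct and follows the paper's proof essentially step for step. Parts (a)--(c) come from Lemma \ref{orbits} and the count $d_{n-1}-s_n$ of nonzero digits at level $n$. Part (d) comes from the fact that only the (at most two) shortest orbits can be unstable, each of proportion $2^{-m(n)}$ with $m(n)=\sum_{i\le n}s_i$; your ``mass halving'' is exactly the paper's observation that $c_1$ has length $2^{n-m(n)}$. The step you flag as the main obstacle is not one: your bound $2\cdot 2^{\#\{i\le n\colon s_i=0\}}=2^{1+n-m(n)}$ is already $o(2^n)$ once $m(n)\to\infty$. Your refined induction, showing that non-shortest orbits are always handled by Lemma \ref{orbits}(a), makes explicit a point the paper only asserts.
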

\begin{proof}
    $(a)$ This is done by induction on $n$. For $n=1$ the claim is obvious, because $\Psi(s)$ acts as a transposition on $V_1$ if $s_1=0$ and as the identity element if $s_1=1$. Suppose now that the claim is true for $n$, and let $c_1\ldots c_{d_n}$ be the decomposition in disjoint cycles of the action of $\Psi(s)$ on $V_n$, where the cycles are ordered by increasing length. By the inductive hypothesis, $d_n=1+\sum_{i=1}^ns_i$. Let $(v_{i_1},\ldots,v_{i_\ell})$ be the orbit corresponding to one of the $c_k$'s. If $k\ge 2$ or if $k=1$ and $s_{n+1}=0$ there exists a unique index $i_k\in \{i_1,\ldots,i_{\ell}\}$ such that $\varepsilon_{n+1}^{i_k}(\Psi(s))=1$. By Lemma \ref{orbits}, the descendants of $\{v_{i_1},\ldots,v_{i_\ell}\}$ form a unique $\Psi(s)$-orbit. Since the orbits corresponding to the $c_i$'s partition $V_n$, the orbits of their descendants partition $V_{n+1}$. It follows that if $s_{n+1}=0$ the number of orbits of $\Psi(s)$ acting on $V_n$ equals the number of orbits of $\Psi(s)$ acting on $V_{n+1}$, and the claim follows. On the other hand, if $k=1$ and $s_{n+1}=1$ consider the orbit $\mathcal O$ in $V_n$ corresponding to $c_1$. By construction of $\Psi(s)$, together with Lemma \ref{orbits}, the descendants of the nodes in $\mathcal O$ form two distinct orbits of length $m$. Hence the number of orbits of the action of $\Psi(s)$ on $V_{n+1}$ is $1$ plus the number of orbits of the action on $V_n$, and the claim follows again.
    
    $(b)$ This follows immediately from $(a)$: if two elements of $\Omega_\infty$ are conjugate, the number of orbits of their action on $V_n$ must coincide for every $n$.
    
    $(c)$ The construction of the portrait of $\Psi(s)$ shows that for every $n\ge 1$, the number of $\varepsilon_n^i(\Psi(s))$ that are equal to $1$ is $d_{n-1}$ if $s_n=0$ and $d_{n-1}-1$ if $s_n=1$, where $d_{n-1}$ is the number of orbits of $\Psi(s)$ acting on $V_{n-1}$. Hence by $(a)$ we have that:
    $$\phi_n(\Psi(s))=1+\sum_{i=1}^ns_i \bmod 2.$$

    $(d)$ The proof of $(a)$ shows that if $c_1\ldots c_m$ is the decomposition in disjoint cycles of the action of $\Psi(s)$ on $V_n$, where the cycles are ordered by increasing length, then if $s_{n+1}=0$ the decomposition in disjoint cycles of the action of $\Psi(s)$ on $V_{n+1}$ is $c_1'\ldots c_m'$ where the length of $c_i'$ is twice the length of $c_i$ for every $i$, while if $s_{n+1}=1$ then the decomposition in disjoint cycles of the action of $\Psi(s)$ on $V_{n+1}$ is $c_0' c_1'\ldots c_m'$, where for every $i\ge 2$ the length of $c_i'$ is twice the length of $c_i$ while $c_0'$ and $c_1'$ have the same length as $c_1$. This proves that the only nodes that potentially do not lie in a stable cycle are those corresponding to the orbit of $c_1$. Now, if $s_i$ is eventually equal to $0$, then from a certain point on the number of orbits of $\Psi(s)$ acting on $V_n$ is constant, and hence $\Psi(s)$ is strongly settled. If on the other hand $m(n)\coloneqq\sum_{i=1}^ns_i\to +\infty$, notice that the same argument as above shows that the length of $c_1$ is $2^{n-m(n)}$, and hence the proportion of stable cycles is at least
    $$\frac{2^n-2^{n-m(n)}}{2^n},$$
    that tends to $1$ when $n\to +\infty$ because $m(n)\to +\infty$.
    \end{proof}
    
    \section{Settled elements that cannot be Frobenius}
    In this section, we prove that uncountably many settled elements among the ones constructed in Section \ref{constructing_settled} can never be the Frobenius of a prime in an arboreal Galois group.
    \begin{lemma}\label{preperiodic_orbit}
    	Let $p$ be a prime, let $K/\Q_p$ be a finite extension with valuation ring $\O_K$ and maximal ideal $\m_K$. Let $f=x^2+c\in K[x]$, $\alpha\in K$ and set $c_{1,\alpha}=\alpha-c$ and $c_{n,\alpha}=f^n(0)-\alpha$ for $n\ge 2$. Suppose that $c_{n,\alpha}\ne 0$ for every $n$ and that $f(\alpha)\ne \alpha$. Then there exist $a\in \N_1$ and $b\in \N$ such that the sequence $(c_{an+b,\alpha})_ {n\ge 1}$ is preperiodic in $K^{\times}/{K^{\times}}^2$.
    \end{lemma}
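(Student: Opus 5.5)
The plan is to exploit two facts. First, $K^\times/{K^\times}^2$ is finite. Second, the square class of $x\in K^\times$ is locally constant. More precisely, if $e=v_K(2)$, then every unit congruent to $1$ modulo $\m_K^{2e+1}$ is a square. Hence the class of $x$ is determined by $v_K(x) \bmod 2$ together with the unit part $x\pi^{-v_K(x)}$ modulo $\m_K^{2e+1}$. So it suffices to control two things along an arithmetic progression of indices $n$: the valuations $v_K(c_{n,\alpha})$ modulo $2$, and the leading digits of $c_{n,\alpha}$. I would split according to the behaviour of the critical orbit $(f^n(0))_n$.

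\emph{Case 1: the orbit of $0$ is unbounded.} This happens exactly when $v_K(c)<0$. Then $|f^n(0)|\to\infty$, and $f^{n}(0)=f^{n-1}(0)^2\bigl(1+c/f^{n-1}(0)^2\bigr)$ with the correction factor tending to $1$. Moreover
$$c_{n,\alpha}=f^n(0)\bigl(1-\alpha/f^n(0)\bigr),$$
where again the second factor tends to $1$. So for large $n$, $c_{n,\alpha}$ is a square times a unit close to $1$, and hence a square. The sequence is eventually constant in $K^\times/{K^\times}^2$, and $a=1$, $b=0$ work.

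\emph{Case 2: the orbit of $0$ is bounded.} Then $c\in\O_K$, so $f$ has integral coefficients and induces a map on $\O_K/\m_K^N$ for every $N$. Consequently $(f^n(0)\bmod \m_K^N)_n$ is eventually periodic for every $N$.

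If $\alpha$ is not an accumulation point of $\{f^n(0)\}$, then $v_K(c_{n,\alpha})\le M$ for some $M$ and all $n$. Taking $N=M+2e+1$, the square class of $c_{n,\alpha}$ is determined by $c_{n,\alpha}\bmod\m_K^N$. This residue is eventually periodic in $n$, so again $a=1$ works.

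If $\alpha$ is an accumulation point, I choose $N$ large and let $a$ be the eventual period of the orbit modulo $\m_K^N$. I pick $b$ so that $x_k\coloneqq f^{ak+b}(0)$ lies in the disc $D=\alpha+\m_K^N$ for all $k$, and set $g=f^a$, so that $g(D)\subseteq D$ and $x_{k+1}=g(x_k)$.

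The cleanest subcase is when $\alpha$ is a periodic point of $f$, with $a$ a multiple of its period, so that $g(\alpha)=\alpha$. Put $\lambda=g'(\alpha)$, which by the chain rule is $\prod 2f^j(\alpha)$. Then
$$x_{k+1}-\alpha=\lambda(x_k-\alpha)\bigl(1+O(x_k-\alpha)\bigr).$$
Note that $\lambda\neq0$: if $\lambda=0$, the cycle of $\alpha$ contains $0$, so $\alpha=f^j(0)$ and $c_{j,\alpha}=0$, which is excluded. If $|\lambda|<1$, then $x_k\to\alpha$, and shrinking $D$ makes the correction factor a unit $\equiv1\bmod\m_K^{2e+1}$. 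Thus
$$c_{ak+b,\alpha}\equiv\lambda^{k-k_0}c_{ak_0+b,\alpha}\pmod{{K^\times}^2},$$
which is periodic of period dividing $2$. The hypothesis $f(\alpha)\neq\alpha$ should be used to exclude a degenerate configuration in this linearization. I would check whether it rules out the case where $\alpha$ is itself the fixed point attracting the critical orbit with a vanishing or special multiplier, and adjust accordingly.

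\emph{Main obstacle.} The step I expect to be hardest is the remaining accumulation case. Here either $|\lambda|=1$ (indifferent), or $\alpha$ is not periodic but still lies in the $\omega$-limit set of $0$. My approach would be to use that $g$ restricted to $D$ is a $p$-adic analytic map which, after further shrinking $D$, is either a strict contraction or an isometry. In the isometric case, the distances $|x_k-\alpha|$ along suitable sub-progressions are governed by a quasi-periodic rotation. I would aim to show that along a further arithmetic progression $k\equiv r \bmod m$, either $v_K(x_k-\alpha)$ stays bounded, reducing to the earlier argument, or the sequence is of the linearizable attracting type above. If this fails, I would try to replace $\alpha$ by a nearby $g$-fixed point $\beta\in D$, given by Hensel or Newton applied to $g(x)-x$, and compare $c_{n,\alpha}$ with $f^n(0)-\beta$.
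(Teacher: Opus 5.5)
\textbf{Gap: the accumulation case is not proved.} Your Case 1 and the non-accumulation part of Case 2 are correct. Case 1 is even slicker than the paper's: for $n\ge 2$, $c_{n,\alpha}=f^{n-1}(0)^2\bigl(1+c/f^{n-1}(0)^2\bigr)\bigl(1-\alpha/f^{n}(0)\bigr)$ is eventually a square. The problem is the case where $\alpha$ is an accumulation point of the critical orbit. There you treat only an attracting periodic $\alpha$. The indifferent case, and the case of a non-periodic $\alpha$ in the $\omega$-limit set of $0$, are left as a list of possible strategies, so the lemma is not proved.

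A telltale sign is that you never find where $f(\alpha)\neq\alpha$ is used. The underlying issue is the choice of progression. You fix $a,b$ so that $f^{ak+b}(0)$ stays in $\alpha+\m_K^N$. Those are exactly the indices where $v_K(c_{n,\alpha})$ is large and uncontrolled, whereas the lemma only asks for \emph{some} progression.

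\textbf{The missing idea: step once more along the orbit.} This is where $f(\alpha)\neq\alpha$ enters. Put $\delta=f(\alpha)-\alpha\neq0$ and choose $N\ge v_K(\delta)+2e+1$, with your $e=v_K(2)$. Keep your $a,b$ and $x_k=f^{ak+b}(0)\in\alpha+\m_K^N$. Since $x_k,\alpha\in\O_K$,
\[
c_{ak+b+1,\alpha}-\delta=f(x_k)-f(\alpha)=(x_k-\alpha)(x_k+\alpha)\in\m_K^N .
\]
Hence $c_{ak+b+1,\alpha}=\delta(1+\epsilon_k)$ with $v_K(\epsilon_k)\ge N-v_K(\delta)\ge 2e+1$. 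So $c_{ak+b+1,\alpha}\equiv\delta$ in $K^\times/{K^\times}^2$ for every $k$. This finishes the accumulation case with no linearization and no case split on multipliers.

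\textbf{How the paper does it.} For $c,\alpha\in\O_K$ the paper uses the same idea, phrased via the identity $c_{n+1,\alpha}=\delta+2\alpha c_{n,\alpha}+c_{n,\alpha}^2$. Whenever $v_K(c_{n,\alpha})>v_K(\delta)$, this gives $v_K(c_{n+1,\alpha})=v_K(\delta)$. So the periodic part of $(c_{n,\alpha}\bmod\pi^N)_n$ contains an index $k$ with $v_K(c_{k,\alpha})\le v_K(\delta)$. Take $\ell>k$ with $c_{\ell,\alpha}\equiv c_{k,\alpha}\bmod\pi^N$. Along $k+(\ell-k)n$, all terms share the valuation and the unit part modulo $\pi^{N-v_K(c_{k,\alpha})}$, hence the square class.
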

	\begin{proof}
		Let $v$ be the valuation of $K$, normalized so that $v(\pi)=1$ for a uniformizer $\pi\in \O_K$. We recall that there exists some positive integer $M_K$ such that for every $N\ge M_K$ the map
        $$(\O_K/\pi^{N})^{\times}\to K^{\times}/{K^{\times}}^2$$
        that sends the residue class of $a\in \O_K^{\times}$ to $a$ modulo squares is well-defined (if $p$ is odd we can choose $M_K=1$ by Hensel's lemma). This means that the class modulo squares of a non-zero element $\pi^r\cdot u$ with $u\in \O_K^{\times}$ can be read off from $r$ and the reduction of $u$ modulo $\pi^{N}$.

         Let $S\subseteq \O_K$ be a system of representatives of $\O_K/\m_K$ containing $0$, so that $S$ is a finite set with the property that every $u\in K$ can be written as $\sum_{i\ge m}a_i\pi^i$, with $a_i\in S$ for every $i$, for some $m\in\Z$. 
		
		First assume that $v(c)=-e<0$, so that $v(f^n(0))=-2^{n-1}e$ for every $n\ge 1$. Choose $N\ge 1$ such that 
        $$2^{N-1}e>\max\{e+M_K,M_K-v(\alpha)\}.$$
        We have that $f^{N-1}(0)=\frac{1}{\pi^{2^{N-2}e}}(u+v\pi^{M_K})$ 
        with $u=a_0+a_1\pi+\ldots+a_{M_K-1}\pi^{M_K-1}$ for some $a_0,\ldots,a_{M_K-1}\in S$ with $a_0\ne 0$ and $v\in \O_K$. Since $2^{N-1}e>e+M_K$, it follows that
		$$f^N(0)=\frac{1}{\pi^{2^{N-1}e}}(u^2+v_0\pi^{M_K})$$
  for some $v_0\in \O_K$, and since $2^{N-1}e>M_K-v(\alpha)$ we also have that
        $$c_{N,\alpha}=\frac{1}{\pi^{2^{N-1}e}}(u^2+v_1\pi^{M_K})$$
        for some $v_1\in \O_K$. Now an easy induction shows that for every $n\ge N$ we have:
        $$c_{n,\alpha}=\frac{1}{\pi^{2^{n-1}e}}(u^{2^{n+1-N}}+v_{n+1-N}\pi^{M_K})$$
        for some $v_{n+1-N}\in \O_K$. Therefore the class of $c_{n,\alpha}$ in $K^{\times}/{K^{\times}}^2$ can be read off from the class of $u^{2^{n+1-N}}$ modulo $\pi^{M_K}$. But of course the sequence $(u^{2^{n+1-N}}\bmod \pi^{M_K})_{n\ge N}$ is preperiodic, and hence we get the claim.

        Next, we assume that $c\in \O_K$ and $v(\alpha)=-r<0$. Write
        $$c=u+v_0\pi^{M_K},$$
        with $u=a_0+a_1\pi+\ldots+a_{M_K-1}\pi^{M_K-1}$ for some $a_0,\ldots,a_{M_K-1}\in S$ and $v_0\in \O_K$. Let
        $$\alpha=\frac{1}{\pi^r}(s+t\pi^{M_K}),$$
        with $s=b_0+b_1\pi+\ldots+b_{M_K-1}\pi^{M_K-1}$ for some $b_0,\ldots,b_{M_K-1}\in S$ with $b_0\ne 0$ and $t\in \O_K$. Now by induction for every $n\ge 1$ we have:
        $$f^n(0)=g^n(0)+v_n\pi^{M_K}$$
        for some $v_n\in \O_K$, where $g=x^2+u\in \O_K[x]$. Consequently, for every $n\ge 2$ we have
        $$c_{n,\alpha}=\frac{1}{\pi^r}(-s+g^n(0)\pi^r+y_n\pi^{M_K}),$$
        for some $y_n\in \O_K$. Now clearly $-s+g^n(0)\pi^r\in \O_K^{\times}$ and of course the sequence $(s+g^n(0)\pi^r\bmod \pi^{M_K})_{n\ge 1}$ is preperiodic. Since the class of $c_{n,\alpha}$ modulo squares can be read off from this sequence, we get the claim.
		
        If $v(c)$ and $v(\alpha)$ are both non-negative, we are going to use the following fact: for infinitely many $N\in \N$, the sequence $(c_{n,\alpha}\bmod \pi^N)_{n\ge 1}$ is not eventually $0$. In fact, if for every $N$ large enough the sequence was eventually $0$, then $f^{n}(0)=\alpha=f^{n+1}(0)\bmod \pi^N$ for some $n$, and hence $f(\alpha)\equiv \alpha\bmod \pi^N$. But then $\pi^N\mid f(\alpha)-\alpha$ for every large enough $N$, implying that $f(\alpha)=\alpha$ and hence contradicting the hypothesis.

        Now let $\delta\coloneqq f(\alpha)-\alpha$, and let $r\in \N$ be the valuation of $\delta$. Now choose $N>M_K+r$ such that the periodic part of the sequence $(c_{n,\alpha}\bmod \pi^N)_{n\ge 1}$ contains a non-zero element. Such $N$ exists because of the above paragraph. Now since
        $$c_{n+1,\alpha}=f(\alpha+c_{n,\alpha})-\alpha=\delta+2\alpha c_{n,\alpha}+c_{n,\alpha}^2,$$
        we see that if $c_{n,\alpha}\bmod \pi^N$ is in the periodic part of the sequence and $v(c_{n,\alpha})>r$ then $v(c_{n+1,\alpha})=r$. It follows that the periodic part of $(c_{n,\alpha}\bmod \pi^N)_{n\ge 1}$ contains an element $c_{k,\alpha}$ of valuation $e\le r$. Write $c_{k,\alpha}=u_k\pi^e$ with $u_k\in \O_K^\times$. Then there exists some $\ell>k$ such that $c_{\ell,\alpha}\equiv c_{k,\alpha}\bmod \pi^{N}$, and if we write $c_{\ell,\alpha}=u_\ell\pi^e$ for some $u_\ell\in \O_K^\times$ we see that $u_k\equiv u_\ell\bmod \pi^{N-e}$. Since $N-e> M_K$, the class modulo squares of the subsequence $(c_{k+(\ell-k)n,\alpha})_{n\ge 1}$ is constant.
	\end{proof}
	\begin{corollary}\label{cor_per}
		Let $K$ be a finite extension of $\Q_p$, let $f=x^2+c\in K[x]$ and $\alpha\in K$ be such that $c_{n,\alpha}\ne 0$ for every $n\ge 1$ and $f(\alpha)\ne \alpha$. If $\sigma\in G_\infty(f,\alpha)\subseteq \Omega_\infty$, then there exist $a\in \N_1$, $b\in \N$ such that the sequence $(\phi_{an+b}(\sigma))_{n\ge 1}\in \F_2^{\N_1}$ is preperiodic.
	\end{corollary}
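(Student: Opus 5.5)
The plan is to link the homomorphisms $\phi_n$ to the discriminant characters of the iterates and then apply Lemma \ref{preperiodic_orbit}. The known fact, from \cite{ferpag1,ferpag2}, is as follows. For $\sigma\in G_n(f,\alpha)=\gal(K_n/K)$, where $K_n$ is the splitting field of $f^n-\alpha$, the value $\phi_n(\sigma)$ records whether $\sigma$ fixes a square root of $c_{n,\alpha}$. More precisely, $\phi_n(\sigma)=0$ if and only if $\sigma(\sqrt{c_{n,\alpha}})=\sqrt{c_{n,\alpha}}$, up to a fixed harmless normalization such as a factor $-1$ or $2$, which is a square class independent of $n$.

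I would verify this directly. The $2^{n-1}$ pairs of sibling roots at level $n$ are $\{\pm\beta_j\}$, where the $\beta_j$ are square roots of $\gamma_j-c$ and the $\gamma_j$ run over the roots of $f^{n-1}-\alpha$. The product $\prod_j\beta_j$ squares to $\prod_j(\gamma_j-c)=\pm(f^{n-1}(c)-\alpha)=\pm c_{n,\alpha}$, since $f^{n}(0)=f^{n-1}(c)$. The automorphism $\sigma$ permutes the $\beta_j$ up to sign. The number of sign changes it introduces is exactly $\sum_i\varepsilon_n^i(\sigma)$, counted via the portrait convention \eqref{sign_parity}. Hence $\sigma(\prod\beta_j)=(-1)^{\phi_n(\sigma)}\prod\beta_j$.

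Given this identity, fix $\sigma\in G_\infty(f,\alpha)$, which by definition is the image of some $\tau\in\gal(\overline K/K)$. Let $\chi_\tau\colon K^\times/{K^\times}^2\to\F_2$ be the map sending a class $x$ to $0$ or $1$ according as $\tau$ fixes $\sqrt{x}$ or not. This is a well-defined homomorphism, essentially the Kummer pairing with $\tau$. Then $\phi_n(\sigma)=\chi_\tau(\pm c_{n,\alpha})$ for all $n$, with the sign depending only on the parity of $2^{n-1}$, hence constant for $n\ge2$.

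Now Lemma \ref{preperiodic_orbit} applies under exactly the hypotheses assumed. It gives $a\in\N_1$ and $b\in\N$ such that $(c_{an+b,\alpha})_{n\ge1}$ is preperiodic in $K^\times/{K^\times}^2$. Composing a preperiodic sequence with the fixed map $x\mapsto\chi_\tau(\pm x)$ yields a preperiodic sequence, so $(\phi_{an+b}(\sigma))_{n\ge1}$ is preperiodic. If $b=0$ and $a=1$, the first term may carry a different sign convention, but a single initial term does not affect preperiodicity.

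The main obstacle is making the discriminant identity rigorous with the paper's specific portrait labeling. The labeling of roots by tree nodes must be compatible with the pairs $\{\pm\beta_j\}$, and the sign bookkeeping must match \eqref{sign_parity}. I would handle this by choosing the labeling so that the two children of a node $\gamma$ are $\beta,-\beta$, with a fixed choice of which one is "left". Then swapping within a pair, relative to this choice, changes the sign of exactly one factor. That the count of such swaps is $\phi_n(\sigma)$ follows directly from the definition of $\varepsilon_n^j$. Alternatively, this identity may simply be cited from \cite{ferpag2}.
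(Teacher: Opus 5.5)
Your proposal is correct and follows the paper's route. The paper applies Lemma \ref{preperiodic_orbit} to get $a,b$ with $(c_{an+b,\alpha})_{n\ge1}$ preperiodic in $K^{\times}/{K^{\times}}^2$, then uses the identity $\phi_n(\sigma)=\chi_{c_{n,\alpha}}(\sigma)$, which it cites from \cite[Proposition 4.2]{ferpag1}; you instead verify that identity directly via $\prod_j\beta_j$, and your verification is sound (with left child $\beta_j$, the number of sign flips is $\sum_j\varepsilon_n^j(\sigma)$ under \eqref{sign_parity}). Your hedging about a sign at $n=1$ is unnecessary, since the paper's convention $c_{1,\alpha}=\alpha-c$ makes $\bigl(\prod_j\beta_j\bigr)^2=c_{n,\alpha}$ exactly for every $n\ge1$.
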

	\begin{proof}
	By Lemma \ref{preperiodic_orbit}, there exist $a\in \N_1$ and $b\in \N$ such that the sequence $(c_{an+b,\alpha})_{n\ge 1}$ is preperiodic in $K^{\times}/{K^{\times}}^2$. Now by \cite[Proposition 4.2]{ferpag1} (see \cite[Lemma 2.9]{ferpag2} for a more general version), for every $i\ge 1$ the value $\phi_i(\sigma)$ coincides with $\chi_{c_{i,\alpha}}(\sigma)$, where for a fixed square root $r$ of $c_{i,\alpha}$ in $\overline{K}$ we have that $\sigma(r)=(-1)^{\chi_{c_{i,\alpha}}(\sigma)}r$. But of course if $(c_{an+b,\alpha})_{n\ge 1}$ is preperiodic in $K^{\times}/{K^{\times}}^2$ then so is $(\chi_{c_{an+b,\alpha}}(\sigma))_{n\ge 1}$, proving the claim.
		\end{proof}

  We are now ready to state and prove our main result. Recall that we denote by $\Psi\colon \{0,1\}^{\N_1}\to \Omega_\infty$ the map constructed in Section \ref{constructing_settled}.

  \begin{theorem}\label{non_frobenius}
      Let $s=(s_i)_{i\ge 1}\in \{0,1\}^{\N_1}$ be such that $s_1=0$ and that $(\phi_{an+b}(\Psi(s)))_{n\ge 1}$ is not preperiodic for every $a\in \N_1$ and $b\in \N$. Let $\sigma\coloneqq\Psi(s)\in \Omega_\infty$. Let $K$ be a number field and let $f=x^2+c\in K[x]$ and $\alpha\in K$ be such that $c_{n,\alpha}\ne 0$ for every $n\ge 1$. Let $G_\infty(f,\alpha)\subseteq \Omega_\infty$ be the image of the arboreal representation associated to $(f,\alpha)$, and suppose that $\sigma\in G_\infty(f,\alpha)$. Then $\sigma$ is a settled element that is not a Frobenius for any prime $\p$ of $K$.
  \end{theorem}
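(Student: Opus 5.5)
Settledness is immediate from Proposition \ref{settled_properties}(d), so the content is the non-Frobenius claim. I would argue by contradiction: suppose $\sigma$ is a Frobenius for some prime $\p$ of $K$, ramified or not, i.e. $\sigma$ lies in the decomposition group $D_\p$ and restricts to a topological generator of $D_\p/I_\p$. Choose the embedding $\overline{K}\hookrightarrow\overline{K_\p}$ corresponding to the prime of $K_\infty(f,\alpha)$ fixed by $D_\p$. Then $D_\p$ is identified with the image of $\gal(\overline{K_\p}/K_\p)$ acting on the same tree of preimages of $\alpha$. Hence $\sigma$ lies in $G_\infty(f,\alpha)$ computed over the $p$-adic field $K_\p$, with the same labelling of the tree, so the same $\phi_n$.

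Next I would apply Corollary \ref{cor_per} over $K_\p$. Its hypotheses are $c_{n,\alpha}\neq 0$ for all $n$, which is given, and $f(\alpha)\neq\alpha$. The latter is where $s_1=0$ enters. By construction $\varepsilon_1^1(\sigma)=1$, so $\phi_1(\sigma)=1$; equivalently, Proposition \ref{settled_properties}(c) gives $\phi_1(\Psi(s))=1+s_1=1$. By the identification $\phi_1=\chi_{c_{1,\alpha}}$ used in Corollary \ref{cor_per}, $\sigma$ acts nontrivially on $\sqrt{\alpha-c}$. If $f(\alpha)=\alpha$, then $\alpha-c=\alpha^2$ is a square in $K$, so every Galois element, $\sigma$ included, fixes $\sqrt{\alpha-c}$. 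This contradiction gives $f(\alpha)\neq\alpha$. Equivalently, in that case $\alpha$ itself would be a root of $f(x)-\alpha$, so $G_\infty$ would fix the two vertices of $V_1$, yet $\sigma$ swaps them.

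Corollary \ref{cor_per} then yields $a\in\N_1$, $b\in\N$ with $(\phi_{an+b}(\sigma))_{n\ge1}$ preperiodic. This contradicts the hypothesis on $s$, so no such $\p$ exists.

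The main obstacle is the first step: making precise that a Frobenius at a possibly ramified prime lies in the local arboreal group. This needs compatibility of the tree labelling under the chosen embedding and the fact that $D_\p$ surjects onto the local Galois image. Note that the Frobenius property itself is barely used; only $\sigma\in D_\p$ matters. I would also double-check that $\phi_n=\chi_{c_{n,\alpha}}$ from \cite{ferpag1} holds over $K_\p$ for the same identification. This is fine because the identity is algebraic: $\phi_n$ records the action on a product of roots, namely a square root of $c_{n,\alpha}$ up to sign.
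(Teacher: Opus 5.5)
Your proposal is correct and follows essentially the same route as the paper. Settledness comes from Proposition~\ref{settled_properties}, and $f(\alpha)\neq\alpha$ comes from $\sigma$ swapping the two vertices of $V_1$; the paper phrases this as irreducibility of $f-\alpha$, which is your second argument. Then $\sigma\in D_\p$ places it in the arboreal image over $K_\p$, so Corollary~\ref{cor_per} contradicts the hypothesis on $s$. As you note, only $\sigma\in D_\p$ is used, which is exactly why the result also covers ramified primes.
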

  \begin{proof}
    That $\sigma$ is settled follows from Proposition \ref{settled_properties}. Since $s_1=0$, $\sigma$ acts non-trivially on the first level of the tree. Since by hypothesis $\sigma \in G_\infty(f,\alpha)$, it follows that $f-\alpha$ is irreducible. In particular, $f(\alpha)\ne \alpha$. Suppose that $\sigma$ was a Frobenius at a prime $\p$ of $K$. This means that $\sigma$ belongs to the decomposition group $G_\p\subseteq G_\infty(f,\alpha)\subseteq \Omega_\infty$; it follows that $\sigma$ belongs to the image of the arboreal representation of $(f,\alpha)$ over the completion of $K$ at $\p$. By Corollary \ref{cor_per} there is therefore an arithmetic progression $(an+b)_{n\ge 1}$ such that $(\phi_{an+b}(\sigma))_{n\ge 1}$ is preperiodic, contradicting the hypothesis.
  \end{proof}

  \begin{remark}\label{explicit}
      The set of elements of $\{0,1\}^{\N_1}$ that satisfy the hypotheses of Theorem \ref{non_frobenius} is uncountable. This can be viewed with a measure theoretic argument, but it is also possible to construct an explicit uncountable set $S$ of sequences that satisfy the hypotheses of the theorem in the following way. Let $\widetilde{s}$ be the sequence 
      $$(1,0,1,1,0,0,1,1,1,0,0,0,1,1,1,1,0,0,0,0,\ldots)\in \F_2^{\N_1}.$$
      This cannot contain any subsequence of the form $(\widetilde{s}_{an+b})_{n\ge 1}$ with $a\in \N_1$ and $b\in \N$ that is preperiodic; indeed if this was the case then it would also contain a subsequence of this form that is eventually constant. However it is easy to see that any subsequence of $\widetilde{s}$ the form $(\widetilde{s}_{an+b})_{n\ge 1}$ necessarily contains infinitely many $0$'s and $1$'s, because $\widetilde{s}$ contains infinitely many arbitrarily long blocks of $0$'s and $1$'s. Hence the element $s\in \{0,1\}^{\N_1}$ such that $((\phi_n)_{n\ge 1}\circ\Psi)(s)=\widetilde{s}$ satisfies the hypotheses of Theorem \ref{non_frobenius}. To construct uncountably many such elements,  let $(n_k)_{k\ge 1}$ be the sequence of indices of the first digit of each block of $0$ and $1$ in $\widetilde{s}$ starting with the first block of $0$'s, so that $(n_k)=(2,3,5,7,10,13,17,\ldots)$. For any sequence $t=(t_k)_{k\ge 1}\in \{0,1\}^{\N_1}$, consider the sequence $(\widetilde{s}(t)_n)_{n\ge 1}$ defined as:
      $$\widetilde{s}(t)_n=\begin{cases}
          \widetilde{s}_n & \mbox{ if } n\notin \{n_k\}_{k\ge 1}\\
          t_{k} & \mbox{ if } n=n_k \mbox{ for some } k \ge 1\\
      \end{cases}.$$
      Then $(\widetilde{s}(t)_n)_n$ clearly satisfies again the hypotheses of Theorem \ref{non_frobenius}, for the same reason for which $s$ does, and of course the set $\widetilde{S}\coloneqq\{\widetilde{s}(t)\colon t\in \{0,1\}^{\N_1}\}$ is uncountable. Now by Proposition \ref{settled_properties}, the set $S=((\phi_n)_{n\ge 1}\circ\Psi)^{-1}(\widetilde{S})\subseteq \{0,1\}^{\N_1}$ is uncountable. Using Proposition \ref{settled_properties}, one can describe explicitly the elements of $S$. For example, if
      $$s=(0,1,1,0,1,0,1,0,0,1,0,0,1,0,0,0,1,0,0,0,\ldots,)$$
      then $(\phi_n)_{n\ge 1}(\Psi(s))=\widetilde{s}.$
  \end{remark}

  \bibliographystyle{plain}
\bibliography{bibliography}

\begin{thebibliography}{1}

\bibitem{bosjon2}
Nigel Boston and Rafe Jones.
\newblock The image of an arboreal {G}alois representation.
\newblock {\em Pure Appl. Math. Q.}, 5(1):213--225, 2009.

\bibitem{lukina}
Mar\'{\i}a~Isabel Cortez and Olga Lukina.
\newblock Settled elements in profinite groups.
\newblock {\em Adv. Math.}, 404(part B):Paper No. 108424, 56, 2022.

\bibitem{ejder}
\"{O}zlem Ejder.
\newblock Galois theory of quadratic rational functions with periodic critical
  points.
\newblock {\em J. Number Theory}, 280:212--245, 2026.

\bibitem{ferpag3}
Andrea Ferraguti and Carlo Pagano.
\newblock Constraining images of quadratic arboreal representations.
\newblock {\em Int. Math. Res. Not. IMRN}, (22):8486--8510, 2020.

\bibitem{ferpag2}
Andrea Ferraguti and Carlo Pagano.
\newblock Abelian dynamical {G}alois groups for unicritical polynomials.
\newblock https://arxiv.org/abs/2303.04783, 2023.

\bibitem{ferpag1}
Andrea Ferraguti, Carlo Pagano, and Daniele Casazza.
\newblock The inverse problem for arboreal {G}alois representations of index
  two.
\newblock https://arxiv.org/abs/1907.08608, 2019.

\bibitem{jones}
Rafe Jones.
\newblock Galois representations from pre-image trees: an arboreal survey.
\newblock In {\em Actes de la {C}onf\'{e}rence ``{T}h\'{e}orie des {N}ombres et
  {A}pplications''}, volume 2013 of {\em Publ. Math. Besan\c{c}on Alg\`ebre
  Th\'{e}orie Nr.}, pages 107--136. Presses Univ. Franche-Comt\'{e},
  Besan\c{c}on, 2013.

\bibitem{bosjon1}
Rafe Jones and Nigel Boston.
\newblock Settled polynomials over finite fields.
\newblock {\em Proc. Amer. Math. Soc.}, 140(6):1849--1863, 2012.

\end{thebibliography}
\end{document}